    \patchcmd{\section}{\scshape}{\large\bfseries}{}{}
    \renewcommand{\@secnumfont}{\bfseries}
\numberwithin{equation}{section}
\newtheorem{theorem}{Theorem}
\newtheorem*{theorem*}{Theorem}
\newtheorem{lemma}[theorem]{Lemma}
\newtheorem{conjecture}{Conjecture}
\theoremstyle{definition}
\title{On the cokernel of the  Baumslag rationalization}
\author{Sergei O. Ivanov} 
\address{
Laboratory of Modern Algebra and Applications,  St. Petersburg State University, 14th Line, 29b,
Saint Petersburg, 199178 Russia}
\email{ivanov.s.o.1986@gmail.com}
\thanks{The work is supported by: (1) Ministry of Science and Higher Education of the Russian Federation, agreement  075-15-2019-1619; (2) the grant of the Government of the Russian Federation for the state
support of scientific research carried out under the supervision of leading scientists,
agreement 14.W03.31.0030 dated 15.02.2018; (3)  RFBR according to the research project 20-01-00030; (4) Russian Federation President Grant for Support of Young Scientists MK-681.2020.1.
}
\def\bau{{\sf Bau}}
\def\ZZ{\mathbb{Z}}
\def\QQ{\mathbb{Q}}
\def\CC{\mathcal{C}}
\begin{document}
\maketitle
\begin{abstract}
We prove that for the free group of rank two $F$  the cokernel of the homomorphism to its Baumslag rationalization 
$F\to {\sf Bau}(F)$ is not abelian. Moreover, this cokernel contains a free  subgroup of countable rank. This answers a question of Emmanuel Farjoun.  
\end{abstract}

\section*{Introduction}

A localization on a category $\CC$ is a couple $L=(L,\eta),$ where $L:\CC\to \CC$ is a functor and $\eta:{\sf Id}\to L$ is a natural transformation such that the natural transformations $\eta L, L\eta:L\to L^2$ are equal and they are isomorphisms. Equivalently one can define a localization as a monad whose multiplication map is an isomorphism, or as a reflection with respect to some reflective subcategory.  In recent years there has been an increased interest in localizations on the category of groups (see
\cite{akhtiamov2019right},
\cite{flores2018localizations}, 
\cite{libman2000cardinality}, \cite{casacuberta2000structures}, \cite{osullivan2012localizations}). 
In personal communication, Emmanuel Dror Farjoun formulated the following conjectures.

\begin{conjecture}
Any localization on the category of groups sends a nilpotent group to a nilpotent group.
\end{conjecture}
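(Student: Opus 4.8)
The plan is to prove the sharper statement that a localization $L$ carries every nilpotent group of class $\le c$ to a nilpotent group of class $\le c$, by induction on $c$. Throughout I would exploit the formal properties of $L$ as the reflection onto the full subcategory $\mathcal L$ of $L$-local groups: as a left adjoint the reflector preserves all colimits (free products, pushouts, coequalizers) and epimorphisms, while $\mathcal L$, being reflective, is closed under all limits. Two consequences are used repeatedly: first, the terminal group is a limit, so $1\in\mathcal L$ and $L(1)=1$; second, for any homomorphism of local groups the kernel, being the equalizer with the trivial map, again lies in $\mathcal L$. The base case $c=1$ asserts that $G$ abelian forces $LG$ abelian, which I isolate as the crucial input (discussed last).

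For the inductive step, fix $G$ nilpotent of class $c$ and set $Q=G/\gamma_c(G)$, so that
\[
1\longrightarrow \gamma_c(G)\longrightarrow G\longrightarrow Q\longrightarrow 1
\]
is a central extension with $\gamma_c(G)$ abelian and $Q$ nilpotent of class $\le c-1$. Applying $L$ yields a homomorphism $LG\to LQ$; its image is a subgroup of $LQ$, which is nilpotent of class $\le c-1$ by the inductive hypothesis, hence itself nilpotent of class $\le c-1$ since subgroups of nilpotent groups are nilpotent of no greater class. Writing $K=\ker(LG\to LQ)$, which is local by the remark above, I would thus present $LG$ as an extension of a class-$(c-1)$ nilpotent group by $K$. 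It therefore suffices to prove that $K$ is central in $LG$: then $LG$ is a central-by-(class $c-1$) extension, hence nilpotent of class $\le c$. Since the composite $\gamma_c(G)\to G\to Q$ is trivial, the natural map $L\gamma_c(G)\to LG$ lands in $K$, and the two missing facts are that this map is onto $K$ and that its image is central.

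Both the base case and the centrality assertion come down to the same formal point, and this is where I expect the real difficulty to lie. Commutativity of $G$ is the statement that the multiplication $\mu\colon G\times G\to G$ is a homomorphism, and centrality of $\gamma_c(G)$ in $G$ is likewise encoded by a homomorphism $\gamma_c(G)\times G\to G$, $(a,g)\mapsto ag$; in each case the domain is a \emph{product}. But $L$ is a left adjoint, so it preserves colimits and need not preserve products: the comparison map $L(G\times G)\to LG\times LG$ is generally not invertible, and there is no formal mechanism forcing $L\mu$ to witness $LG$ as abelian, nor forcing conjugation in $LG$ by elements outside the image of $\eta_G$ to fix $L\gamma_c(G)$. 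I would attempt to repair this by re-expressing commutativity and centrality through colimit data that $L$ does preserve---for instance via a pushout/cocycle presentation of the central extension, or by using that $LG\times LG$ is itself local to build the required comparison---but controlling $\ker\eta$ and the new, possibly non-central elements created by $L$ is exactly the non-exactness obstruction on which the whole argument, and indeed the truth of the conjecture, rests.
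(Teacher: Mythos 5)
You have not proved the statement, and indeed no complete proof could be compared against the paper, because the statement is not a theorem there: it is Conjecture~1, which the paper explicitly declares open (``Generally Conjectures 1 and 2 are open but there are some particular positive results in this direction''). The only cases settled in the literature the paper cites are nilpotency class $\leq 2$ (Libman \cite{libman2000cardinality}) and right exact localizations (Akhtiamov--Ivanov--Pavutnitskiy \cite{akhtiamov2019right}); the paper's own theorem concerns the failure of Conjecture~3, not a proof of Conjecture~1. So any submitted ``proof'' of this statement is either incomplete or would constitute a genuinely new result.

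Your proposal is, by your own admission in the last paragraph, incomplete, and the gaps are exactly the content of the open problem rather than technical details. Concretely, your induction requires three things it does not supply: (i) the base case that $LA$ is abelian when $A$ is --- this is in fact known (it is the class $\leq 1$ case of Libman's theorem, see also \cite{casacuberta2000structures}), but the known proof is a genuinely group-theoretic argument, not a formal consequence of adjointness, so you cannot cite it as a formality; (ii) surjectivity of $L\gamma_c(G)\to K=\ker(LG\to LQ)$, which is a right-exactness property that arbitrary localizations simply do not have --- this is precisely why \cite{akhtiamov2019right} can handle right exact localizations while the general case remains open; and (iii) centrality of $K$ in $LG$, which fails to follow formally for the reason you identify: centrality is encoded by maps out of a \emph{product}, and $L$ preserves colimits, not products, so nothing constrains how the new elements of $LG$ outside ${\rm Im}(\eta_G)$ conjugate the image of $L\gamma_c(G)$. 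Diagnosing this obstruction accurately (which you do) is not the same as overcoming it; as written, the argument reduces the conjecture to statements that are equivalent in difficulty to the conjecture itself, and no way to establish them is currently known.
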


\begin{conjecture}
For any localization $L$ on the category of groups and for any finite $p$-group $G$ the map $G\to LG$ is surjective.
\end{conjecture}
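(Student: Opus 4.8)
Write $M=LG$ and $\eta=\eta_G\colon G\to M$, and let $\bar G=\eta(G)\le M$. Since $\bar G$ is a quotient of the finite $p$-group $G$, it is again a finite $p$-group, and the goal is to prove $\bar G=M$. The plan is to first squeeze out everything the universal property gives for free, and then to isolate precisely the extra input that the finiteness and the prime-power hypothesis must supply. Let $N$ be the normal closure of $\bar G$ in $M$. The composite $G\to M\to M/N$ is trivial, so if $M/N$ happens to be $L$-local then the quotient map and the trivial map $M\to M/N$ both restrict along $\eta$ to the same (trivial) homomorphism; the uniqueness clause in the universal property then forces them to coincide, whence $M/N=1$, i.e.\ $N=M$.

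Suppose for the moment that $N=M$ has been established. In the abelianization $M^{\mathrm{ab}}$ the normal closure $N$ and the subgroup $\bar G$ have the same image, so $\bar G$ surjects onto $M^{\mathrm{ab}}$; being a quotient of the finite abelian $p$-group $G^{\mathrm{ab}}$, this forces $M^{\mathrm{ab}}$ to be a finite abelian $p$-group. If in addition $M$ is nilpotent, I can finish: in a nilpotent group a subset generates the whole group as soon as it generates the abelianization, so $\bar G$ surjecting onto $M^{\mathrm{ab}}$ upgrades to $\bar G=M$, which is exactly surjectivity of $\eta$. Thus the whole statement reduces to two claims about $M=LG$: (A) the cokernel $M/N$ is $L$-local, and (B) $M$ is nilpotent.

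Neither (A) nor (B) can follow from formal manipulation of the universal property alone: the main theorem of this paper shows that for a free group the analogous cokernel is non-abelian and contains a free subgroup, so it is in particular far from being trivial or $L$-local. The finiteness of $G$ must therefore be used decisively. To attack (A) and (B) together I would induct on $|G|$ through a central extension $1\to C\to G\to G/C\to 1$ with $C\cong\ZZ/p$, invoking surjectivity of $\eta$ for the smaller groups $C$ and $G/C$. The difficulty is that $L$ is not an exact functor, so there is no short exact sequence relating $LC$, $LG$ and $L(G/C)$ and no five-lemma is available; instead one must argue by hand that the images of $LC$ and of $\eta(G)$ normally generate $M$ and that $L$-locality is inherited along the extension.

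The hard part will be (B), the nilpotency of $LG$ — which is precisely the relevant case of Conjecture~1 — together with the locality statement (A). I expect the decisive new ingredient to be a genuine finiteness or residual-$p$ constraint on those $L$-local groups that receive an $L$-equivalence from a finite $p$-group, strong enough to collapse the cokernel that, for infinite groups such as the free group $F$, is known to be large. Establishing such a constraint, rather than any of the surrounding bookkeeping, is where I anticipate the real work to lie.
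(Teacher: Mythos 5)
The statement you were asked to prove is not a theorem of this paper at all: it is Conjecture 2, which the paper explicitly declares open (``Generally Conjectures 1 and 2 are open''), the only known positive cases being right exact localizations \cite{akhtiamov2019right} and, for the related Conjecture 1, nilpotent groups of class $\leq 2$ \cite{libman2000cardinality}. So there is no proof in the paper to compare against, and your proposal --- as you yourself concede in the last paragraph --- is not a proof either. The gap is exactly where you locate it: all the content is loaded onto the two unproven claims, (A) that the quotient $M/N$ of $LG$ by the normal closure $N$ of $\eta(G)$ is $L$-local, and (B) that $LG$ is nilpotent. Claim (B) is precisely Conjecture 1 in the case at hand and is open; claim (A) is not a formal consequence of the axioms of a localization: quotients of local groups need not be local (for the Baumslag rationalization, a quotient of a uniquely divisible group is divisible but in general not uniquely divisible), and the paper's main theorem is exactly a demonstration of how large such a cokernel can be. Your proposed induction on $|G|$ through a central extension is only a plan; you give no mechanism for transporting locality or nilpotency across the extension in the absence of any exactness property of $L$.

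What is correct in your write-up are the two formal reductions: the orthogonality argument showing that if $M/N$ is $L$-local then $M/N=1$ (the quotient map and the trivial map $M\to M/N$ restrict to the same map on $G$, and the bijection $\mathrm{Hom}(LG,H)\cong\mathrm{Hom}(G,H)$ for local $H$ forces them to coincide), and the standard fact that a subgroup of a nilpotent group which maps onto the abelianization is the whole group. These are sound, and they correctly identify that for a general localization ``trivial cokernel'' is strictly weaker than ``surjective,'' with nilpotency as the bridge. But since (A) and (B) carry the entire difficulty and remain unproven, the proposal establishes only a (reasonable) reduction of one open conjecture to two others. If you want an unconditional statement in this circle of ideas, note that for the Baumslag rationalization itself Conjecture 2 is vacuously true: a uniquely divisible group is torsion-free, so every homomorphism from a finite group to one is trivial, whence $\bau(G)=1$ and $G\to\bau(G)$ is surjective for every finite $G$.
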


\begin{conjecture}
For any localization $L$ on the category of groups and any group $G$ the cokernel of the map $G\to LG$ is abelian.
\end{conjecture}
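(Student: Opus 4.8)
The final statement displayed above is Conjecture~3, which this paper refutes; accordingly the result to be proved is that for the free group $F$ of rank two the cokernel $C:=\bau(F)/\langle\langle \eta(F)\rangle\rangle$ of $\eta\colon F\to\bau(F)$ is nonabelian, and in fact contains a free subgroup of countable rank (here $\langle\langle\,\cdot\,\rangle\rangle$ denotes normal closure and I write $F$ also for its image $\eta(F)$). The plan is to use the explicit model of the Baumslag rationalization as a sequential colimit $F=G_0\to G_1\to\cdots$, $\bau(F)=\operatorname{colim}_i G_i$, in which $G_{i+1}$ is assembled from $G_i$ by adjoining roots: for $w\in G_i\setminus\{1\}$ (every nontrivial element has infinite order) and $n\ge 2$ one forms the amalgamated free product $G_i*_{\langle w\rangle}\langle t\mid t^n=w\rangle$, with uniqueness of roots forced in the limit. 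Since passage to the quotient by $\langle\langle F\rangle\rangle$ commutes with the colimit, $C=\operatorname{colim}_i C_i$ where $C_i:=G_i/\langle\langle F\rangle\rangle$.

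First I would analyse what killing $F$ does to the adjoined roots. If $w\in G_i$ is conjugate into $F$ then it becomes trivial in $C_i$, so a root $t$ with $t^n=w$ acquires the relation $t^n=1$; the images of the adjoined roots are therefore torsion (and the divisible closure of each cyclic $\langle g\rangle\le F$ maps to a copy of $\QQ/\ZZ$). The essential point is that these torsion images need not commute. Writing $F=\langle x,y\rangle$, choose in $\bau(F)$ a square root $t$ of $x$ and a cube root $s$ of $y$; since $x$ and $y$ do not commute there is no reason for $t$ and $s$ to commute, while in $C$ their images satisfy $\bar t^{\,2}=\bar s^{\,3}=1$. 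I would then show that $\bar t$ and $\bar s$ generate a free product $\ZZ/2*\ZZ/3\cong PSL_2(\ZZ)$ inside $C$. Since $PSL_2(\ZZ)$ contains a nonabelian free group, and a free group of rank two in turn contains free subgroups of every countable rank, this yields at once the nonabelianness of $C$ and the promised free subgroup of countable rank.

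Everything therefore reduces to showing that no collapsing occurs, i.e.\ that the obvious map $\ZZ/2*\ZZ/3\to C$, and more generally each structure map $C_i\to C_{i+1}$, is injective. I would prove injectivity of $C_i\to C_{i+1}$ by establishing two facts with Bass--Serre theory. First, that adjoining roots and killing $\langle\langle F\rangle\rangle$ commute, that is $\langle\langle F\rangle\rangle_{G_{i+1}}\cap G_i=\langle\langle F\rangle\rangle_{G_i}$, so that $C_{i+1}$ is obtained from $C_i$ simply by adjoining roots to its elements. Second, that adjoining an $n$-th root to $\bar w\in C_i$ is again an amalgamated free product --- over $\langle\bar w\rangle\cong\ZZ$ when $\bar w$ has infinite order, or over $\ZZ/k$ embedded as $\langle s^n\rangle\le\langle s\rangle\cong\ZZ/(nk)$ when $\bar w$ has order $k$ --- and hence injective on $C_i$. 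The hard part is the first fact: one must control the interaction of the normal closure of $F$ with the entire system of amalgamations, which is precisely where the uniqueness-of-roots relations could a priori force unwanted identifications among the adjoined roots. Granting this, $C_1\hookrightarrow C$ and the free product survives.

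Finally I would record why rank two is indispensable: for $F=\ZZ$ one has $\bau(\ZZ)=\QQ$ and cokernel $\QQ/\ZZ$, which is abelian, precisely because all roots are roots of powers of a single generator and are therefore forced to commute. The noncommuting pair $x,y$, available only in rank $\ge 2$, is what produces the free-product structure, and hence the nonabelian cokernel, after rationalization.
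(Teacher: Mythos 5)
Your approach is genuinely different from the paper's, but it hinges on a step that you yourself flag and then set aside, and that step is not a routine verification --- it is the entire difficulty of the problem. You need $\langle\langle F\rangle\rangle_{G_{i+1}}\cap G_i=\langle\langle F\rangle\rangle_{G_i}$ at every stage, so that each $C_i\to C_{i+1}$ is injective and $\ZZ/2*\ZZ/3$ survives into the colimit $C$. Nothing in the amalgamated-product structure prevents products of conjugates $gfg^{-1}$, with $g$ from later stages, from landing in $G_i$ outside $\langle\langle F\rangle\rangle_{G_i}$, and Bass--Serre theory by itself does not control the normal closure of a vertex subgroup across an infinite iterated system of amalgamations. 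Worse, the model of $\bau(F)$ you start from is itself shaky: Baumslag's construction is not ``amalgamate freely and force uniqueness of roots in the limit.'' If uniqueness of roots has to be forced by imposing further relations, then the stages $G_i$ need not inject into $\bau(F)$ at all, and your normal-form analysis collapses at the first step; Baumslag's actual construction avoids such quotients by adjoining roots in a carefully controlled way, and the unique-root property of the colimit is itself a nontrivial theorem. So the assertion that $\bar t$ and $\bar s$ generate $\ZZ/2*\ZZ/3$ inside $C$ is precisely what remains unproven, and your outline supplies no tool for proving it. Note also that you are aiming at a strictly stronger statement than is needed: an actual embedding of a free product of cyclic groups into ${\sf CBau}(F_2)$.

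The paper circumvents all of this by never looking inside $\bau(F_2)$. It uses Baumslag's extension theorem: any set map from $\{x,y\}$ to a (possibly non-uniquely) divisible group extends, perhaps non-uniquely, to a homomorphism from $F^\QQ(x,y)$. Taking the target $SO(3)$, which is divisible but not uniquely divisible, and sending $x,y$ to rotations of odd finite orders $p,q$ about orthogonal axes, one precomposes with the automorphism of $F^\QQ(x,y)$ determined by $x\mapsto x^{p}$, $y\mapsto y^{q}$; the composite kills the image of $F_2$, hence factors through ${\sf CBau}(F_2)$, and its image still contains $G(p,q)$, which by Radin--Sadun is $\ZZ/p*\ZZ/q$. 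This exhibits $\ZZ/p*\ZZ/q$ only as a \emph{subquotient} of ${\sf CBau}(F_2)$, not a subgroup --- and that weaker conclusion suffices: by Levi's theorem the kernel of $\ZZ/p*\ZZ/q\to\ZZ/p\times\ZZ/q$ is free of rank $(p-1)(q-1)$, and since every epimorphism onto a free group splits, a free subquotient is automatically a subgroup. That projectivity trick is exactly what lets the paper sidestep your injectivity problem; it is unavailable for $\ZZ/2*\ZZ/3$ itself, which is why your plan is stuck with the hard unproven step while the paper's is not.
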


Generally Conjectures 1 and 2 are open but there are some particular positive results in this direction. Conjecture 1 was proven for nilpotent groups of class $\leq 2$ by Libman \cite{libman2000cardinality}. Conjectures 1 and 2 were proved for right exact localizations by Akhtiamov, Pavutnitskiy and Ivanov  \cite{akhtiamov2019right}. This work is devoted to construction of a counterexample for Conjecture 3. 

Let us remind the definition of the Baumslag rationalization. A group $G$ is called uniquely divisible (or a  $\QQ$-group), if for any  $n\in \ZZ$ the map $G\to G, g\mapsto g^n$ is bijective. The Baumslag rationalization of a group $G$ is the universal map to a uniquely divisible group
\[
\eta_G: G\longrightarrow \bau(G). 
\]
Baumslag rationalization exists for any group and defines a localization $(\bau,\eta)$ on the category of groups.

A homomorphism $f:H\to G$ is called a \emph{Nikolov-Segal map} if it satisfies 
\[ [G,G] = [G,{\rm Im}(f)].\] 
Note that the cokernel of a Nikolov-Segal map is abelian. 
The term 'Nikolov-Segal map' is motivated by a theorem of Nikolov and Segal 
 \cite[Th. 1.7]{nikolov2012generators} which implies that for a finitely generated group $G$ the homomorphism to its profinite completion $G\to \widehat{G}$  is a Nikolov-Segal map. 
It is also known \cite[Prop. 8.1]{akhtiamov2019right} that if $G$ is nilpotent, then the map $\eta_G:G\to \bau(G)$ is a Nikolov-Segal map. 

For any group $G$ we consider the cokernel of the map to its Baumslag rationalization 
\[{\sf CBau}(G):={\sf Coker}(\eta_G :G\to \bau(G)).\]
In other words, ${\sf CBau}(G)$ is the quotient of $\bau(G)$ by the normal closure of ${\rm Im}(\eta_G).$  Since $\eta_G:G\to \bau(G)$ is a Nikolov-Segal map for a nilpotent group $G$, ${\sf CBau}(G)$ is abelian in this case. So Conjecture 3 holds for the case of Baumslag rationalization and a nilpotent group $G.$ The aim of this paper is to prove that Conjecture 3 fails for the Baumslag rationalization and the free group of rank 2. Moreover, we prove the following theorem.  

\begin{theorem*}\label{th_main}
Let $F_2$ be the free group of rank 2. Then  ${\sf CBau}(F_2)$ contains a free  subgroup of countable rank. In particular, Conjecture 3 fails.  
\end{theorem*}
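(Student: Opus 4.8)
\emph{Strategy.} The plan is to produce a single homomorphism out of ${\sf CBau}(F_2)$ whose image is a free product of two copies of $\QQ/\ZZ$, and then to extract the free subgroup from that free product. Write $F_2=\langle x,y\rangle$. Since $\QQ$ is uniquely divisible, the map $F_2\to\QQ$, $x\mapsto 1,\ y\mapsto 0$, extends over the rationalization by the universal property, so $x$ has infinite order in $\bau(F_2)$; hence the subgroup $\QQ_x\leq\bau(F_2)$ generated by the unique roots $x^{1/n}$ is a copy of $\QQ$ with $x$ corresponding to $1$, and similarly $\QQ_y\cong\QQ$. In the cokernel these two lines map to subgroups $L_x,L_y$, each a quotient of $\QQ/\ZZ$ because $x$ and $y$ die. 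The heart of the matter is to show that $L_x$ and $L_y$ are nontrivial and generate their free product inside ${\sf CBau}(F_2)$.

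\emph{Reduction to a hand-built map.} To this end I would construct a homomorphism $f\colon\bau(F_2)\to W$ into a divisible group $W$ that kills $x$ and $y$ and sends $\QQ_x,\QQ_y$ onto two prescribed copies $P_x,P_y\cong\QQ/\ZZ$ generating a free product $P_x*P_y\leq W$. The target cannot be taken uniquely divisible: any homomorphism from the torsion-free group $\bau(F_2)$ into a torsion-free group sends each root of $x$ to a root of $1$, hence to $1$, so the universal property is useless. Instead I would realize $\bau(F_2)$ through its explicit transfinite construction, built from $F_2$ by iterated amalgamated products $G\mapsto G*_{\langle u\rangle}\QQ$ adjoining roots, followed by the quotients enforcing uniqueness of roots. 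Taking $W$ to be the divisible closure of the abstract free product $P_x*P_y$, I extend $f$ over each root-adjunction by choosing a root of the image in $W$, using the rational points $1/n$ of $P_x$ (resp.\ $P_y$) for the roots of $x$ (resp.\ $y$) and arbitrary coherent choices elsewhere.

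\emph{Conclusion, granting $f$.} Since $f(x)=f(y)=1$, the map descends to $\bar f\colon{\sf CBau}(F_2)\to W$, and $\bar f$ carries $\langle L_x,L_y\rangle$ onto $\langle P_x,P_y\rangle=P_x*P_y$. A free product of two nontrivial locally finite divisible groups contains a nonabelian free subgroup; concretely $P_x*P_y\supseteq\ZZ/2*\ZZ/3$, which contains a free subgroup $\Phi$ of countable rank. The preimage of $\Phi$ in $\langle L_x,L_y\rangle$ surjects onto $\Phi$, and since free groups are projective this surjection splits, producing a copy of $\Phi$ inside ${\sf CBau}(F_2)$. This is the desired free subgroup of countable rank, and in particular ${\sf CBau}(F_2)$ is nonabelian, refuting Conjecture 3.

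\emph{The main obstacle.} The crux is the well-definedness of $f$, that is, making the hand-built map respect the relations of $\bau(F_2)$ that enforce unique divisibility. The delicate point is descent through the uniqueness quotients: distinct root expressions that become equal in $\bau(F_2)$ must go to equal elements of $W$, whereas $n$-th roots in the merely divisible group $W$ are not unique. I expect to resolve this by building $W$ as a parallel transfinite colimit mirroring the construction of $\bau(F_2)$, performing the matching identifications stage by stage, so that $f$ is defined compatibly throughout and automatically factors through each uniqueness quotient. It then remains to verify that this process neither collapses $P_x,P_y$ nor forces them to commute; I would check this by a Bass--Serre normal-form argument for the amalgams, or by composing with a further homomorphism onto a concrete group in which $\ZZ/2*\ZZ/3$ is visibly embedded.
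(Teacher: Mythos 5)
Your final step (granting $f$) is correct and coincides with the paper's own endgame: a free product of nontrivial finite cyclic groups inside the image contains a free subgroup of countable rank, its preimage in ${\sf CBau}(F_2)$ surjects onto it, and the surjection splits because free groups are projective. The gap is everything before that: $f$ is never constructed, and you yourself flag its well-definedness as ``the crux.'' This is not a deferrable verification; it is the entire content of the theorem. Two specific problems. First, no universal property can produce your $f$: once you insist that $f(x)=f(y)=1$, any extension principle applied to the set map $x,y\mapsto 1$ is satisfied by the trivial homomorphism, so killing the generators destroys all control over where the roots $x^{1/n},y^{1/n}$ go; and the genuine universal property of $\bau(F_2)$ concerns only uniquely divisible targets, which (as you correctly note) are useless here. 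Second, your fallback of building $W$ as a transfinite colimit ``mirroring'' the construction of $\bau(F_2)$ founders exactly where you admit it does: the quotients enforcing uniqueness of roots identify elements in ways that are not understood (this opacity is why the theorem is hard), and the ``matching identifications'' performed in $W$ could collapse $P_x$ and $P_y$ or force relations between them; Bass--Serre normal forms control the amalgamation stages, not these quotients, so the check you propose does not apply. Worse, the property you want---a map killing $x,y$ whose images of the two rational lines are full copies of $\QQ/\ZZ$ generating a free product---is a strengthening of anything in the literature and may simply be false: as a warning sign, in the natural divisible target $SO(3)$ the groups of rational rotations about two orthogonal axes never generate a free product, because the half-turns about orthogonal axes commute.

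The paper sidesteps the need for any such control with two moves you are missing. First, it invokes Baumslag's extension theorem (\cite[Th. 39.6]{baumslag1960some}): any set map into a (possibly non-uniquely) divisible group extends, possibly non-uniquely, to a homomorphism from $F^\QQ(X)$. This is applied not to $x,y\mapsto 1$ but to $x\mapsto R_x^{2\pi/p}$, $y\mapsto R_z^{2\pi/q}$, rotations of odd finite orders $p,q\geq 3$ in $SO(3)$, giving some $\Phi:\bau(F_2)\to SO(3)$. Second---the key trick replacing your unconstructed $f$---rather than forcing $\Phi$ to kill the generators, precompose with the endomorphism $\Psi$ of $F^\QQ(x,y)$ determined by $\Psi(x)=x^{p}$, $\Psi(y)=y^{q}$; it is an automorphism because $x\mapsto x^{1/p}$, $y\mapsto y^{1/q}$ inverts it. Now $\Phi\Psi$ kills $x$ and $y$, hence descends to ${\sf CBau}(F_2)$, while ${\rm Im}(\Phi\Psi)={\rm Im}(\Phi)\supseteq\langle R_x^{2\pi/p},R_z^{2\pi/q}\rangle$, which by Radin--Sadun \cite{radin19992} is exactly $\ZZ/p*\ZZ/q$. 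Levi's theorem \cite{levi1941commutatorgroup} identifies the kernel of $\ZZ/p*\ZZ/q\to\ZZ/p\times\ZZ/q$ as a free group of rank $(p-1)(q-1)$, and then your own splitting argument (plus $[F_2,F_2]\cong F_\infty$) finishes. If you replace your hand-built $f$ by this citation-plus-automorphism device, the rest of your write-up survives essentially unchanged.
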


Since ${\sf CBau}(F_2)$ contains a free  subgroup of countable rank, it is not abelian and not even solvable. Moreover, it is not in any proper variety of groups.

It seems that  right exact localizations on the category of groups that were studied in \cite{akhtiamov2019right} have much nicer properties than arbitrary ones. For example, as we noticed above, Conjectures 1 and 2 hold for right exact localizations.  
However, Conjecture 3 fails even in this case because  the Baumslag rationalization is a right exact localization \cite[Th 4.1]{akhtiamov2019right}. But we still have some reasons to believe that  Conjecture 3 holds for right exact localizations and nilpotent groups. Moreover, we believe that in this case the map $G\to LG$ is a Nikolov-Segal map. 
We leave it here in the form of new conjecture. 

\begin{conjecture}
For any right exact localization $L$ on the category of groups and any nilpotent group $G$ the map $G\to LG$ is a Nikolov-Segal map.
\end{conjecture}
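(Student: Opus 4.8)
The plan is to prove the statement by induction on the nilpotency class $c$ of $G$, using right exactness to lower the class while staying inside the category of nilpotent groups. Write $M=\mathrm{Im}(\eta_G)\le LG$. Since $[LG,M]\subseteq[LG,LG]$ always holds, it suffices to prove the reverse inclusion $[LG,LG]\subseteq[LG,M]$, which is exactly the assertion that the cokernel $LG/\langle\langle M\rangle\rangle$ is abelian. Because Conjecture~1 is known for right exact localizations \cite{akhtiamov2019right}, the group $LG$ is nilpotent, which keeps the induction well behaved.

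For the base case $c=1$ I would first establish that a right exact localization sends an abelian group to an abelian group; granting this, $LG$ is abelian and the statement is vacuous. The inductive step is where right exactness does the real work. Let $\gamma=\gamma_c(G)$, a central abelian subgroup, and $\bar G=G/\gamma$ of class $c-1$. Applying the right exact functor $L$ to the right exact sequence $\gamma\to G\to\bar G\to 1$ yields $L\gamma\to LG\to L\bar G\to 1$ right exact, i.e.\ $L\bar G\cong LG/K$ with $K=\langle\langle\mathrm{Im}(L\gamma\to LG)\rangle\rangle$. By the inductive hypothesis $\eta_{\bar G}$ is a Nikolov--Segal map, so $[L\bar G,L\bar G]=[L\bar G,\bar M]$ where $\bar M=\mathrm{Im}(\eta_{\bar G})$. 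By naturality of $\eta$ the quotient map $LG\epi L\bar G$ is $L$ applied to $G\epi\bar G$, so it carries $M$ onto $\bar M$ and $[LG,M]$ onto $[L\bar G,\bar M]$. Pulling the equality back therefore gives
\[
[LG,LG]\ \subseteq\ [LG,M]\cdot K .
\]
Thus the whole problem collapses to the single inclusion $K\subseteq[LG,M]$.

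The heart of the argument, and the step I expect to be the main obstacle, is exactly this inclusion $K\subseteq[LG,M]$. On the level of $\gamma$ itself it is immediate, since $\gamma=[\gamma_{c-1}(G),G]\subseteq[G,G]$, so the non-localized image of $\gamma$ lies in $[M,M]\subseteq[LG,M]$; the difficulty is that $K$ is built from the possibly larger image of the \emph{localized} group $L\gamma$, which contains the new ``local'' elements that $L$ creates on the central factor. To control these I would exploit that $\gamma$, being central, forces $\mathrm{Im}(L\gamma\to LG)$ to be central in $LG$ (a centrality-preservation property of right exact localizations that I would have to prove, and which also underlies the abelian base case), so that $K=\mathrm{Im}(L\gamma\to LG)$. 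I would then present $\gamma$ as a right exact quotient of the commutator construction $\gamma_{c-1}(G)\otimes G^{\mathrm{ab}}$; since right exact functors commute with such colimit presentations, $L$ applied to the commutator map should place $\mathrm{Im}(L\gamma\to LG)$ inside the subgroup of localized commutators $[LG,M]$.

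In the special case of the Baumslag rationalization this is the explicit root-extraction computation behind \cite[Prop.~8.1]{akhtiamov2019right}; the general right exact case requires replacing that computation by the functorial statement that $L$ does not enlarge the commutator subgroup beyond $[LG,M]$ when one passes to a central quotient. Verifying this centrality-and-commutator compatibility, together with the abelian base case, is the crux on which the whole induction rests, and I expect it to be where right exactness must be used in an essential, rather than purely formal, way.
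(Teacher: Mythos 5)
First, a point of order: the statement you set out to prove is Conjecture 4 of the paper, which the author explicitly leaves \emph{open} (``We leave it here in the form of new conjecture''). The paper contains no proof of it, so there is no argument of the paper to compare yours against; a complete proof would be a new result. Your reduction skeleton, taken on its own, is sound as far as it goes: granting that right exact localizations preserve nilpotency (proved in \cite{akhtiamov2019right}) and that localizations of abelian groups are abelian (known; see \cite{casacuberta2000structures}), the induction on the class $c$, the identification $L\bar G\cong LG/K$ with $K$ the normal closure of $\mathrm{Im}(L\gamma\to LG)$ obtained by applying $L$ to the right exact sequence $\gamma\to G\to\bar G\to 1$, and the resulting inclusion $[LG,LG]\subseteq[LG,M]\cdot K$ are all correct, and they correctly isolate $K\subseteq[LG,M]$ as the remaining content.

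But that remaining step is essentially the whole conjecture, and your sketch for it does not go through. The centrality of $\mathrm{Im}(L\gamma\to LG)$ in $LG$ is an unproven lemma: right exactness only asserts that $L$ preserves right exact sequences of groups, and nothing in that property controls centralizers. More seriously, the decisive move --- presenting $\gamma$ as a quotient of $\gamma_{c-1}(G)\otimes G^{\mathrm{ab}}$ and claiming that ``right exact functors commute with such colimit presentations'' --- is invalid: $L$ is a functor on groups, not an additive functor on abelian groups, it does not commute with tensor products, and the commutator pairing $\gamma_{c-1}(G)\times G\to\gamma$ is not a group homomorphism, so there is no group-level right exact sequence to which $L$ can be applied to force $\mathrm{Im}(L\gamma\to LG)$ into $[LG,M]$. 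In the one case where the conclusion is actually known, the Baumslag rationalization \cite[Prop.~8.1]{akhtiamov2019right}, the proof rests on an explicit root-extraction identity of the shape $[x^{1/n},y]\equiv[x,y]^{1/n}$ modulo lower central terms --- that is, on a concrete description of the new local elements as roots. A general right exact localization provides no such description of the elements of $L\gamma$, and none of the formal properties you invoke substitutes for it. So your proposal amounts to an honest and correct reduction plus a restatement of the open problem, not a proof.
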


\section*{Acknowledgments}
I am grateful to Emmanuel Farjoun, Danil Akhtiamov and Fedor Pavutnitskiy for helpful discussions.

\section*{Proof of the Theorem}

A group $G$ is called divisible (resp. uniquely divisible), if for any  $n\in \ZZ$ the map $G\to G, g\mapsto g^n$ is surjective (resp. bijective). A group is uniquely divisible if and only if it is local with respect to the homomorphism $\ZZ \hookrightarrow \QQ.$ The existence of the Baumslag localization for any group follows from \cite[Cor. 1.7]{casacuberta1992orthogonal}.

The {\it free uniquely divisible group} (or \emph{the free $\QQ$-group}) generated by a set $X$ is defined as $$F^\QQ(X)=\bau( F(X)),$$ where $F(X)$ is the free group generated by $X$. Recently  Jaikin-Zapirain proved a conjecture of Baumslag that  $F^\QQ(X)$ is residually torsion-free nilpotent \cite{jaikin2020free}. It is easy to see that any map from the set $X$ to a uniquely divisible group $\varphi : X \to G$ can be uniquely extended to a homomorphism $\Phi: F^\QQ(X)\to G.$ Gilbert Baumslag proved the following version of this statement for all divisible groups. 

\begin{theorem}[{Baumslag, \cite[Th. 39.6]{baumslag1960some}}] \label{th_baumslag} Any map from a set to a (possibly non-uniquely) divisible group $\varphi:X\to G$ can be extended to a (possibly non-unique) homomorphism $\Phi: F^\QQ(X)\to G.$
\end{theorem}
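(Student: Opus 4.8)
The plan is to exploit the explicit construction of the free $\QQ$-group $F^\QQ(X)=\bau(F(X))$ as an ascending (transfinite) union of groups obtained by repeatedly adjoining roots, and to extend $\varphi$ stage by stage, using divisibility of $G$ to supply the required roots at each step. The key conceptual point is that although $F^\QQ(X)$ is the free object in the category of \emph{uniquely} divisible groups, the target $G$ here is only divisible, so the localization universal property does not apply directly; one must build the extension by hand, and the freedom of choosing among the (possibly non-unique) roots in $G$ is exactly what accounts for the non-uniqueness of $\Phi$.

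First I would recall that $F^\QQ(X)$ is realized as a directed union $F(X)=G_0\subseteq G_1\subseteq\cdots\subseteq G_\alpha\subseteq\cdots$ with $F^\QQ(X)=\bigcup_\alpha G_\alpha$, where at each successor stage one adjoins a single root: $G_{\alpha+1}=G_\alpha *_{\langle w\rangle}\langle t\rangle$ is the amalgamated free product identifying the infinite cyclic subgroup $\langle w\rangle\le G_\alpha$ with $n\ZZ\le\ZZ=\langle t\rangle$ via $w\mapsto t^n$, so that $t^n=w$ holds in $G_{\alpha+1}$; at limit stages one takes unions. Every $G_\alpha$ is torsion-free—$G_0=F(X)$ is, and an amalgamated free product of torsion-free groups over a common subgroup is again torsion-free—so each nontrivial $w$ has infinite order, $\langle w\rangle\cong\ZZ$, and the amalgamation legitimately embeds both factors. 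Enumerating all (element, exponent) pairs transfinitely and repeating cofinally ensures that the union $\bigcup_\alpha G_\alpha$ is uniquely divisible, hence equals $\bau(F(X))$.

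Then I would extend $\varphi$ by transfinite induction along this tower. By freeness of $F(X)$, the map $\varphi$ extends to $\psi_0\colon G_0\to G$. Assuming $\psi_\alpha\colon G_\alpha\to G$ is defined, at a successor step $G_{\alpha+1}=G_\alpha *_{\langle w\rangle}\langle t\rangle$ the universal property of the amalgamated free product says that a homomorphism $G_{\alpha+1}\to G$ is the same as a pair of homomorphisms $G_\alpha\to G$ and $\langle t\rangle\to G$ that agree on $\langle w\rangle$; taking $\psi_\alpha$ on the first factor, I only need an element $h\in G$ with $h^n=\psi_\alpha(w)$, i.e. an $n$-th root of $\psi_\alpha(w)$. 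Since $G$ is divisible such an $h$ exists, and choosing any one and setting $t\mapsto h$ defines an extension $\psi_{\alpha+1}\colon G_{\alpha+1}\to G$ restricting to $\psi_\alpha$. At a limit stage $\lambda$ the compatible family $(\psi_\beta)_{\beta<\lambda}$ glues to $\psi_\lambda\colon G_\lambda=\bigcup_{\beta<\lambda}G_\beta\to G$. The union of all these maps is the desired homomorphism $\Phi\colon F^\QQ(X)\to G$ extending $\varphi$.

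The main obstacle is not the inductive extension itself—once divisibility is available, each step is immediate and the compatibility at limits is automatic because the tower grows monotonically—but rather pinning down the construction of $F^\QQ(X)$ in a form to which the amalgamated-product universal property applies: verifying that roots are only ever adjoined to infinite-order elements (so both factors embed), and that the resulting directed union is genuinely uniquely divisible and identifiable with $\bau(F(X))$. Torsion-freeness of the intermediate groups is what makes this work, and it propagates up the tower from $F(X)$ since amalgamated free products of torsion-free groups over a subgroup stay torsion-free. I expect the remaining care to be purely bookkeeping in the transfinite recursion—ensuring that every element eventually acquires a root of every order—which is independent of the choices of roots made in $G$.
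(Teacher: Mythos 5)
The paper gives no proof of this statement---it is quoted verbatim from Baumslag \cite{baumslag1960some}---and your skeleton (realize $F^\QQ(X)$ as a transfinite tower of root adjunctions over $F(X)$, then extend $\varphi$ step by step, using divisibility of $G$ to choose an image for the new root at each successor amalgam) is indeed the strategy of Baumslag's original argument; the inductive extension step itself is sound. The genuine gap is in your construction and identification of $F^\QQ(X)$. You assert that enumerating all (element, exponent) pairs transfinitely and ``repeating cofinally'' makes the union uniquely divisible, with torsion-freeness as the underlying mechanism. Cofinal repetition guarantees only divisibility, and as described it actively destroys uniqueness of roots: if $w$ already has an $n$-th root $u\in G_\alpha$ (for instance one adjoined at an earlier stage) and you nevertheless form $G_{\alpha+1}=G_\alpha *_{\langle w\rangle}\langle t\rangle$ with $t^n=w$, then $u\neq t$ in the amalgam by the normal form theorem, so the union contains two distinct $n$-th roots of $w$, is not uniquely divisible, and hence is not $\bau(F(X))$. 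Torsion-freeness cannot repair this: the amalgam of two infinite cyclic groups $\langle a\rangle$ and $\langle t\rangle$ identifying $a^2$ with $t^2$ is the Klein bottle group, which is torsion-free yet has $a\neq t$ with $a^2=t^2$.

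Even with corrected bookkeeping (adjoin an $n$-th root only to an element that lacks one), the claim that the direct limit has \emph{unique} roots---equivalently, that no coincidence $x^n=y^n$ with $x\neq y$ is ever created in the iterated amalgams---is the actual hard content of Baumslag's theory of groups with unique roots and occupies a substantial part of \cite{baumslag1960some}; it cannot be waved through as ``purely bookkeeping.'' The alternative of building the tower inside the abstract $\bau(F(X))$ (the smallest root-closed subgroup containing the image of $F(X)$ equals $\bau(F(X))$ by the universal property) gets unique divisibility for free, but then you lose the other half: the subgroup $\langle G_\alpha, t\rangle\subseteq\bau(F(X))$ is a priori only a quotient of $G_\alpha *_{\langle w\rangle}\langle t\rangle$, possibly with extra relations, so the universal property of the amalgamated product that your successor step invokes is no longer available. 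Either way, the structural statement you deferred---that $\bau(F(X))$ genuinely carries such a tower of honest amalgamated free products---is precisely where the theorem's difficulty lives; granted that input (Baumslag's construction), your transfinite extension argument does complete the proof along the same lines as the original.
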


\begin{lemma}\label{lemma_subquotient} Let $\varphi:X\to G$ be a map from a set $X$ to a divisible group $G.$ Assume that all elements of $\varphi(X)$ are torsion elements. Then the subgroup $\langle \varphi(X) \rangle$ is a subquotient  of ${\sf CBau}(F(X)).$ 
\end{lemma}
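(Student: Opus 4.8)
The plan is to produce a single homomorphism out of $F^\QQ(X)$ that simultaneously kills the image of $F(X)$ (so that it factors through ${\sf CBau}(F(X))$) and yet has every $\varphi(x)$ in its image. The torsion hypothesis is exactly what allows these two demands to coexist: if $\varphi(x)$ has finite order $n_x$, then replacing the generator $\eta_{F(X)}(x)$ by its $n_x$-th root makes the root map to $\varphi(x)$ while its $n_x$-th power, the original generator, maps to $\varphi(x)^{n_x}=1$. Once such a map is built, ${\rm Im}(\Theta)$ will be a quotient of ${\sf CBau}(F(X))$ containing $\langle\varphi(X)\rangle$, which is precisely what it means for $\langle\varphi(X)\rangle$ to be a subquotient.

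Write $n_x$ for the order of $\varphi(x)$, set $u_x:=\eta_{F(X)}(x)\in F^\QQ(X)$, and let $N$ be the normal closure of $\{u_x:x\in X\}$, so that ${\sf CBau}(F(X))=F^\QQ(X)/N$. Since $F^\QQ(X)$ is uniquely divisible, each $u_x$ has a unique $n_x$-th root $u_x^{1/n_x}$. The crux of the argument is the following change of variables: using the universal property of the free $\QQ$-group, I would define the endomorphisms $A,B\colon F^\QQ(X)\to F^\QQ(X)$ by $A(u_x)=u_x^{1/n_x}$ and $B(u_x)=u_x^{n_x}$. Any homomorphism between uniquely divisible groups automatically commutes with roots, so $A\circ B$ and $B\circ A$ are endomorphisms fixing every $u_x$; by uniqueness of extensions they equal the identity, whence $A$ is an automorphism with $A^{-1}=B$. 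In particular $\{u_x^{1/n_x}:x\in X\}$ is again a free generating set.

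Next I would invoke Baumslag's Theorem~\ref{th_baumslag} to extend $\varphi\colon X\to G$ to a homomorphism $\Phi\colon F^\QQ(X)\to G$ with $\Phi(u_x)=\varphi(x)$; the non-uniqueness of this extension is harmless. Put $\Theta:=\Phi\circ A^{-1}=\Phi\circ B$. Then $\Theta(u_x)=\Phi(u_x^{n_x})=\varphi(x)^{n_x}=1$ for every $x$, so $\Theta$ vanishes on $N$ and descends to $\bar\Theta\colon {\sf CBau}(F(X))\to G$. On the other hand $\Theta(u_x^{1/n_x})=\Phi\bigl(A^{-1}(A(u_x))\bigr)=\Phi(u_x)=\varphi(x)$, so each $\varphi(x)$, and hence all of $\langle\varphi(X)\rangle$, lies in ${\rm Im}(\Theta)={\rm Im}(\bar\Theta)$.

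Finally I would conclude that ${\rm Im}(\bar\Theta)$ is a quotient of ${\sf CBau}(F(X))$ containing $\langle\varphi(X)\rangle$, so $\langle\varphi(X)\rangle$ is a subgroup of a quotient of ${\sf CBau}(F(X))$, i.e.\ a subquotient, as claimed. The only genuine work is verifying that $A$ is an automorphism; everything else is a formal consequence of the universal property of $F^\QQ(X)$, Baumslag's theorem, and the single identity $\varphi(x)^{n_x}=1$ supplied by the torsion hypothesis.
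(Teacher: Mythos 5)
Your proposal is correct and is essentially the paper's own proof: your $B$ and $A$ are the paper's $\Psi$ and $\Psi'$ (the power and root substitution automorphisms, inverse to each other by the universal property), your $\Theta=\Phi\circ B$ is the paper's composite $\Phi\Psi$, and the descent to ${\sf CBau}(F(X))$ and the image computation proceed identically. The only cosmetic difference is that you verify $\varphi(x)\in{\rm Im}(\Theta)$ by evaluating at $u_x^{1/n_x}$ directly, whereas the paper notes ${\rm Im}(\Phi\Psi)={\rm Im}(\Phi)\supseteq\langle\varphi(X)\rangle$ since $\Psi$ is an automorphism.
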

\begin{proof}
By Theorem \ref{th_baumslag} we have a homomorphism $\Phi: F^\QQ(X)\to G$ such that $\Phi(x)=\varphi(x) $ for $x\in X.$ Since $\varphi(X)$ consists of torsion elements, for any $x\in X$ there exists a number $n_x\geq 1$  such that $\varphi(x)^{n_x}=1.$ By the universal property of $F^\QQ(X)$ there exists a unique homomorphism  $\Psi: F^\QQ(X)\to F^\QQ(X)$ such that $\Psi(x)=x^{n_x}$ for any $x\in X.$ Similarly there exists a unique homomorphism $ \Psi': F^\QQ(X)\to F^\QQ(x)$ such that $\Psi'(x)=x^{\frac{1}{n_x}}$ for any $x\in X.$ It is easy to see that $\Psi^{-1}=\Psi',$ and hence $\Psi$ is an automorphism. The composition 
$$\Phi \Psi : F^\QQ(X)\longrightarrow G $$
satisfies $\Phi\Psi(x)=\varphi(x)^{n_x} =1$ for all $x\in X.$ Thus the homomorphism  $\Phi\Psi\eta_{F(X)}$ is trivial, and hence, the homomorphism $\Phi\Psi$ induces a homomorphism  $$\Theta:{\sf CBau}(F(X))\to G.$$ 
Since $\Psi$ is an automorphism, we have 
$$ \langle \varphi(X) \rangle \subseteq {\rm Im}(\Phi)  = {\rm Im}( \Phi \Psi ) = {\rm Im}(\Theta).$$
The assertion follows. 
\end{proof}

In order to prove the theorem we need to add two additional well known ingredients: some information about subgroups of the special orthogonal group $SO(3),$ which is an interesting example of a divisible but not uniquely divisible group; and some information about the kernel of the map from the free product to the direct product for any groups  $G*H\to G\times H.$ 

Consider the special orthogonal group $SO(3),$ the group of rotations of $\mathbb{R}^3.$ Following Radin and Sadun \cite{radin19992},
for each positive integer $p$ we denote by $R_x^{2\pi/p}$ the rotation around the axis $x$ on the angle $2\pi/p,$ and for any positive integer $q$ we denote by $R_z^{2\pi/q}$ the rotation around the axis $z$ on on the angle $2\pi/q.$ We also denote by 
$G(p,q)$ the subgroup of $SO(3)$ generated by $R_x^{2\pi/p}$ and $R_z^{2\pi/q}$
$$G(p,q) = \langle  R_x^{2\pi/p},R_z^{2\pi/q} \rangle \subseteq SO(3).$$
\begin{theorem}[{Radin, Sadun, \cite[Th.2]{radin19992}}]\label{th_radin}
If $p$ and $q$ are odd, then  $$G(p,q)\cong \ZZ/p*\ZZ/q.$$ \end{theorem}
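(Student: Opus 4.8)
The statement to prove is that the natural surjection $\pi:\ZZ/p * \ZZ/q \to G(p,q)$, sending the two cyclic generators to $a:=R_x^{2\pi/p}$ and $b:=R_z^{2\pi/q}$, is injective; equivalently, that no nontrivial reduced word $a^{i_1}b^{j_1}a^{i_2}b^{j_2}\cdots$ (with $i_k\not\equiv 0\ (\mathrm{mod}\ p)$ and $j_k\not\equiv 0\ (\mathrm{mod}\ q)$) equals the identity of $SO(3)$. The plan is to prove this by a ping-pong argument, but not over $\mathbb{R}$: there $a$ and $b$ are elliptic and exhibit no expanding dynamics. Instead I would exploit that, since $p$ and $q$ are odd, the rational prime $2$ is coprime to $pq$. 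The matrix entries of $a$ and $b$ lie in a cyclotomic field $K$; I would fix a prime $P$ of $K$ above $2$, pass to the completion $K_P$, lift the rotations along the spin cover $SU(2)\to SO(3)$ to elements of $SL_2(K_P)$, and let the group act on the Bruhat--Tits tree $T$ of $PGL_2(K_P)$ (the projective action kills the central $-1$ of the spin cover, so $a,b$ act with exact orders $p,q$).

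In suitable coordinates the lift of $b$ is diagonal, $\mathrm{diag}(\zeta,\zeta^{-1})$ with $\zeta=\zeta_{2q}$ a root of unity, while the lift of $a$ is $\begin{pmatrix}\cos(\pi/p)&\sqrt{-1}\,\sin(\pi/p)\\ \sqrt{-1}\,\sin(\pi/p)&\cos(\pi/p)\end{pmatrix}$. Since $\zeta$ is a $P$-adic unit, $b$ lies in the maximal compact and fixes the base vertex $x_0=[\mathcal{O}_P^2]$ of $T$. The key computation is the reduction of $a$ modulo $P$: the entries of $2a$ are $\zeta_{2p}+\zeta_{2p}^{-1}$ and $\zeta_{2p}-\zeta_{2p}^{-1}$, and in residue characteristic $2$ both reduce to the single unit $\overline{\zeta_{2p}}+\overline{\zeta_{2p}}^{-1}$ (which is nonzero precisely because $p$ is odd, so $\overline{\zeta_{2p}}\ne\overline{\zeta_{2p}}^{-1}$). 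Hence $2a$ reduces to a rank-one matrix, the entries of $a$ have valuation $-v_P(2)=-1$ (as $2$ is unramified in $K$), and $a$ moves $x_0$ to a distinct vertex at distance $2$. This separation, unavailable over $\mathbb{R}$, is exactly what oddness buys. The same computation applies to every nontrivial power $a^{i}$, since $\overline{\zeta_{2p}}^{\,i}+\overline{\zeta_{2p}}^{\,-i}\ne 0$ whenever $i\not\equiv 0\ (\mathrm{mod}\ p)$.

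I would then convert this into a free-product conclusion by tracking tree distance. The powers of $b$ fix $x_0$ and rotate the link of $x_0$ (a projective line over the residue field), fixing only the two directions spanned by the eigenlines and permuting all others in $q$-cycles; each nontrivial power of $a$ displaces $x_0$ by $2$ along the tree. For a reduced word the $b$-syllables stay at $x_0$ while reorienting the geodesic and the $a$-syllables each add length $2$, so that, provided no $b$-syllable folds the outgoing $a$-direction back onto the incoming one, the distance $d(x_0, w\cdot x_0)$ grows by $2$ per $a$-syllable and in particular is positive; thus $w\ne 1$. Packaged as the ping-pong lemma for free products, this yields $\langle a\rangle * \langle b\rangle = \ZZ/p * \ZZ/q$ inside $PGL_2(K_P)$, and hence inside $SO(3)$.

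The main obstacle is exactly the non-backtracking clause: I must verify that no nontrivial power of $b$ carries the direction in which $a$ (or a conjugate of a power of $a$) displaces $x_0$ back onto itself, equivalently that the two $b$-fixed directions in the link are never the relevant $a$-directions. This is the combinatorial heart of the argument, and it is where the precise reductions of $a$ and $b$ modulo $P$ must be compared on the link of $x_0$; the oddness of $p$ and $q$ re-enters here, both in guaranteeing that the link action of $b$ has only the two coordinate fixed points and in ensuring, via the rank-one reduction above, that every power of $a$ genuinely leaves $x_0$. A secondary, purely formal point is the passage between $SU(2)$ and $SO(3)=PSU(2)$: working projectively throughout removes the central involution $a^p=b^q=-1$, so no separate bookkeeping of signs is needed.
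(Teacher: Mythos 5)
You should first be aware that the paper contains no proof of this statement: it is imported verbatim from Radin and Sadun, so there is nothing internal to compare against. Relative to the original Radin--Sadun argument --- an elementary induction on reduced words that tracks divisibility by $2$ of matrix entries in rings of cyclotomic integers --- your Bruhat--Tits ping-pong is a geometric repackaging of exactly the same $2$-adic phenomenon, and it is a legitimate route. The computations you do give are correct: with $\zeta=\zeta_{2p}$, both entries of $2a^i$ reduce modulo $P$ to $\bar\zeta^{\,i}+\bar\zeta^{\,-i}$, which is nonzero for $i\not\equiv 0 \pmod p$ because $\bar\zeta$ has odd order $p$ in the residue field $k$; hence $2a^i$ is integral with unit entries, has rank-one reduction and determinant $4$ of valuation $2$, so its elementary divisors are $(1,\pi^2)$ and $a^i$ moves the base vertex $x_0$ by exactly $2$, while $b^j$ fixes $x_0$ and acts on the link $\mathbb{P}^1(k)$ as a non-scalar diagonal matrix whenever $j\not\equiv 0\pmod q$.

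What you flag as ``the main obstacle'' is, however, already settled by your own computation, and you should close that loop rather than leave it open. The first edge of the geodesic from $x_0$ to $a^ix_0$ points in the direction of the line ${\rm Im}(\overline{2a^i})\subseteq k^2$, and since all entries of $\overline{2a^i}$ are equal and nonzero, that line is $[1:1]$ --- the \emph{same} direction for every $i\not\equiv 0\pmod p$. The nontrivial $\bar b^{\,j}$ fix only $[1:0]$ and $[0:1]$, so they move $[1:1]$; equivalently, no nontrivial power of $b$ fixes the vertex $m$ adjacent to $x_0$ in the direction $[1:1]$ (which is the common midpoint of all the geodesics $[x_0,a^ix_0]$, fixed by all of $\langle a\rangle$ since it lies in ${\rm Fix}(a)$). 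Your worry about ``conjugates of powers of $a$'' is also unnecessary: once the two local conditions ($a^ix_0\neq x_0$ and $b^jm\neq m$) hold, any reduced alternating word $w$ produces a path $x_0,\ \beta_0 m,\ \beta_0\alpha_1 x_0,\ \beta_0\alpha_1\beta_1 m,\dots$ through translates of $x_0$ and $m$ only, with no backtracking at any vertex; in a tree such a locally geodesic path is a geodesic, so $d(x_0,wx_0)$ equals twice the number of $a$-syllables and $w\neq 1$. Together with the fact that pure powers of $a$ and $b$ are nontrivial rotations of orders $p$ and $q$, this completes the proof that $G(p,q)\cong\ZZ/p*\ZZ/q$.
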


For any groups $G,H$ we set 
$$G \square H:={\rm Ker}(G*H\to G\times H).$$ The following statement is well-known and the standard reference is   \cite{levi1941commutatorgroup} (see also \cite{gilbert1989non}).

\begin{theorem}[{Levi  \cite{levi1941commutatorgroup}}]\label{th_levi}
For any groups $G,H$ the group $G\square H$ is a free group freely generated by the commutators $[g,h]$ for $g\in G\setminus\{1\}, h\in H\setminus \{1\}$ 
\[
G\square H \cong F((G\setminus\{1\})\times (H\setminus \{1\})).
\]
\end{theorem}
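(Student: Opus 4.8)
The plan is to deduce the theorem from Bass--Serre theory, by realizing $G\square H$ as the fundamental group of an explicit quotient graph. First I would record the only structural input about the kernel that is needed: since both inclusions $G\hookrightarrow G\times H$ and $H\hookrightarrow G\times H$ are injective, the normal subgroup $N:=G\square H=\ker(G*H\to G\times H)$ meets $G$ and $H$ trivially, and being normal it meets every conjugate $wGw^{-1}$ and $wHw^{-1}$ trivially as well.

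Next I would let $G*H$ act on its Bass--Serre tree $T$, whose vertices are the cosets $wG$ and $wH$ and whose edges are the elements $w\in G*H$, with $w$ joining $wG$ to $wH$. The vertex stabilizers are exactly the conjugates of $G$ and of $H$, and the edge stabilizers are trivial. By the previous paragraph $N$ intersects every vertex stabilizer trivially, so $N$ acts freely on $T$. A group acting freely on a tree is free, and it is canonically isomorphic to the fundamental group $\pi_1(T/N)$ of the quotient graph; moreover $\pi_1$ of a connected graph is free on the edges lying outside any chosen spanning tree. Thus freeness of $N$ is immediate, and the whole problem reduces to (i) computing the graph $T/N$ and (ii) identifying the free generators attached to the non-tree edges.

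For (i) I would use the canonical bijection $N\backslash(G*H)\cong G\times H$ to pass to orbits. A short computation with the map $\pi:G*H\to G\times H$ shows that the $G$-type vertices of $T/N$ are indexed by the elements of $H$, the $H$-type vertices by the elements of $G$, and the edges by the pairs $(g,h)\in G\times H$, the edge of $(g,h)$ joining the $H$-vertex indexed by $g$ to the $G$-vertex indexed by $h$; in other words $T/N$ is the complete bipartite graph on the sets $G$ and $H$. Choosing the spanning tree consisting of all edges $(g,h)$ with $g=1$ or $h=1$ (a double star), the non-tree edges are exactly the pairs $(g,h)$ with $g\neq 1$ and $h\neq 1$, which already yields the expected count $(|G|-1)(|H|-1)$ in the finite case.

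The hard, or at least most delicate, part is (ii): pinning down which element of $N\subseteq G*H$ corresponds to each non-tree edge. For this I would lift, starting from the base vertex $G\in T$, the loop that runs along the spanning tree to the $H$-vertex of $g$, crosses the non-tree edge $(g,h)$, and returns along the spanning tree; tracing the successive lifted edges through $T$ shows that the endpoint of the lift is the vertex $ghg^{-1}h^{-1}\cdot G$, so the corresponding deck transformation is the commutator $ghg^{-1}h^{-1}=[g^{-1},h^{-1}]$. As $(g,h)$ ranges over $g\neq 1$, $h\neq 1$, the elements $[g^{-1},h^{-1}]$ range, after reindexing, over precisely $\{[g,h]:g\in G\setminus\{1\},\,h\in H\setminus\{1\}\}$, so these commutators form a free basis of $N$. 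The main care needed throughout is the bookkeeping of orientation and basepoint conventions, so that the lift lands on exactly this commutator, and the verification that each step remains valid for infinite $G,H$, where the counting argument is unavailable but the spanning-tree description of $\pi_1$ of a graph applies verbatim.
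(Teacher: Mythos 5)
The paper does not actually prove this statement: it is quoted as a classical theorem of Levi, with \cite{levi1941commutatorgroup} (see also \cite{gilbert1989non}) given as the reference, and it is used as a black box in the proof of the main theorem. So there is no in-paper argument to compare yours against step by step; what you have written is a genuine, essentially complete proof, and it is the geometric counterpart of the classical combinatorial proofs (normal-form or Reidemeister--Schreier rewriting arguments) found in the cited sources. Your key steps all check out: $N=\ker(G*H\to G\times H)$ meets every conjugate of $G$ and of $H$ trivially, hence acts freely on the Bass--Serre tree $T$ of the free product; the quotient graph $T/N$ is the complete bipartite graph on $G\sqcup H$, via $N\backslash(G*H)/G\cong H$, $N\backslash(G*H)/H\cong G$ and $N\backslash(G*H)\cong G\times H$; the double star is a spanning tree; and the lift of the loop through the non-tree edge $(g,h)$ does end at the vertex $ghg^{-1}h^{-1}\cdot G$, so the associated element of $N$ is $ghg^{-1}h^{-1}$, and as $(g,h)$ runs over nontrivial pairs these are exactly the commutators of the statement after the reindexing $(g,h)\mapsto(g^{-1},h^{-1})$, under either standard commutator convention. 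Two small points deserve to be made explicit rather than left implicit: the action of $G*H$ (hence of $N$) on $T$ is without inversions, because it preserves the bipartition of the vertices into $G$-cosets and $H$-cosets --- this hypothesis is needed both for ``a group acting freely on a tree is free'' and for $N\cong\pi_1(T/N)$; and the isomorphism $\pi_1(T/N,\bar v_0)\cong N$ should be pinned down by the convention that a loop corresponds to the deck transformation carrying the start of its lift to its end, which is the convention your computation uses. As for what each route buys: yours gets freeness of $N$ and the rank count $(|G|-1)(|H|-1)$ immediately from the tree action and the Euler characteristic of $T/N$, uniformly in the cardinalities of $G$ and $H$, whereas the classical rewriting proofs stay within elementary combinatorial group theory and extract the basis directly from the rewriting process; in both approaches the genuinely delicate step is the same one you isolate, namely identifying the basis elements attached to non-tree edges (or to Schreier generators) as the commutators $[g,h]$.
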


Now we are ready to prove our theorem.

\begin{proof}[Proof of Theorem] Note that the group $SO(3)$ is divisible. Indeed, any its element is a rotation around some line on some angle $\alpha$, and the  rotations around the same line on the angles $\alpha/n$ are roots of this element. Consider two odd positive integers  $p,q\geq 3$ and the map $\varphi:\{x,y\}\to SO(3)$ given by $\varphi(x)=R_x^{2\pi/p}$ and $\varphi(y)=R_y^{2\pi/q}.$ 
Then by Lemma  \ref{lemma_subquotient} we obtain that $G(p,q)$ is a subquotient of ${\sf CBau}(F_2),$ where $F_2=F(x,y)$ is the free group of rank 2. 
Using Theorem \ref{th_radin}, we obtain that  $\ZZ/p*\ZZ/q$ is a subquotient of ${\sf CBau}(F_2).$ Hence $\ZZ/p\square \ZZ/q$ is also a subquotient of ${\sf CBau}(F_2).$ 
By Theorem \ref{th_levi} the group $\ZZ/p\square \ZZ/q$ is a free group with $(p-1)(q-1)$ generators. 
It follows that a free group of rank $(p-1)(q-1)$  is a subquotient of ${\sf CBau}(F_2).$ 
Since any epimorphism to a free group splits, a free group of rank $(p-1)(q-1)$  is a subgroup of ${\sf CBau}(F_2).$ In particular, the free group of rank two $F_2$ can be embedded into ${\sf CBau}(F_2).$ It is well known that the commutator subgroup of $F_2$ is a free group of countable rank $[F_2,F_2]\cong F_\infty.$ Therefore $F_\infty$ can be embedded into ${\sf CBau}(F_2).$
\end{proof}

\printbibliography

\end{document}